\documentclass[reqno]{amsart}
\usepackage{amssymb,amsthm,amscd}
\usepackage[pdftex]{graphics}
\input{diagrams}

\newcommand{\fa}{{\mathfrak a}}
\newcommand{\fA}{{\mathfrak A}}

\newcommand{\fp}{{\mathfrak p}}
\newcommand{\fP}{{\mathfrak P}}

\newcommand{\fm}{{\mathfrak m}}

\newcommand{\Cl}{{\operatorname{Cl}}}
\newcommand{\Gal}{{\operatorname{Gal}}}
\newcommand{\Spl}{{\operatorname{Spl}}}
\newcommand{\art}[2]{\big(\frac{#1}{#2}\big)}
\newcommand{\Z}{{\mathbb Z}}
\newcommand{\Q}{{\mathbb Q}}

\newcommand{\cO}{{\mathcal O}}

\newcommand{\too}{\longmapsto}
\newcommand{\lra}{\longrightarrow}
\newcommand{\la}{\langle}
\newcommand{\ra}{\rangle}

\newtheorem{thm}{Theorem}[section]
\newtheorem{prop}[thm]{Proposition}
\newtheorem{lem}[thm]{Lemma}
\newtheorem{cor}[thm]{Corollary}
\numberwithin{equation}{section}

\setcounter{section}{8}

\title{Harbingers of Artin's Reciprocity Law. \\
         III. Gauss's Lemma and Artin's Transfer}
\author{F. Lemmermeyer}
\email{hb3@ix.urz.uni-heidelberg.de}
\address{M\"orikeweg 1, 73489 Jagstzell, Germany}

\begin{document}
\maketitle

\markboth{Harbingers of Artin's Reciprocity Law}
         {\today \hfil Franz Lemmermeyer}
\begin{center} \today \end{center}
\bigskip

\section{Artin's Reciprocity Law}
This section is devoted to a brief presentation of Artin's 
reciprocity law in the classical ideal theoretic language.
The central part of this article is devoted to studying connections
between Artin's reciprocity law and the proofs of the quadratic
reciprocity law using Gauss's Lemma.

Let $\fm$ be a modulus (a formal product of an ideal $\fm_0$ and 
some infinite primes in some number field), and $D_\fm$ the
group of fractional ideals coprime to $\fm$. An ideal group $H$
is a satisfying $P_\fm^{(1)} \subseteq H \subseteq D_\fm$. To 
each finite extension $K/F$ and each modulus $\fm$ we can attach
the corresponding Takagi group
$$ T_{K/F}\{\fm\} = \{\fa \in D_\fm: \fa = N_{K/F} \fA\} \cdot P_\fm^{(1)}, $$
where $\fA$ runs through the ideals coprime to $\fm$ in $K$, 
and $P_\fm^{(1)}$ the group of principal ideals $(\alpha)$ with
$\alpha \equiv 1 \bmod \fm$. 

The first steps in any approach to class field theory are the
two basic inequalities:
\begin{enumerate}
\item First Inequality\footnote{In Chevalley's idelic approach
      the order of these inequalities is reversed.}.
      For any finite extension $K/F$ and any modulus $\fm$ we have
      $$ (D_\fm : T_{K/F}\{\fm\}) \le (K:F). $$ 
\item Second Inequality. If $K/F$ is a cyclic extension, then
      $$ (D_\fm : T_{K/F}\{\fm\}) \ge (K:F) $$ 
      for any sufficiently large modulus $\fm$.
\end{enumerate}
Once class field theory is established it turns out that the
first inequality can be improved to $(D_\fm : T_{K/F}\{\fm\}) \mid (K:F)$,
and that the second inequality holds for arbitrary abelian extensions.
Moreover, the smallest modulus $\fm$ for which the second inequality
holds is called the conductor of the extension $K/F$, and the second
inequality holds if and only if the modulus $\fm$ is a multiple of
the conductor.

\subsection*{Cyclotomic Fields}

Cyclotomic fields $K = \Q(\zeta_m)$ are abelian extensions of $\Q$, 
hence class fields. In fact, the Takagi group of $K/\Q$ for the 
modulus $m \infty$ is the group $T_K$ of ideals 
$(N_{K/\Q} \fa) \cdot \Q^1_{m\infty}$. Since norms of ideals
coprime to $m$ are congruent to $1 \bmod m$, we have 
$T_K = \Q^1_{m\infty}$.

\subsection*{Quadratic Fields}
For quadratic number fields $K = \Q(\sqrt{d}\,)$ with discriminant $d$
we take $\fm = (d)$ if $d > 0$ and $\fm = (d)\infty$ if $d < 0$. Then 
$T_{K/\Q} \{\fm\}$ consists of all fractional ideals in $\Z$ that can
be written as a product of a norm of an ideal from $K$ and a 
principal ideal $(a)$ with $a \equiv 1 \bmod \fm$.

Now we claim

\begin{prop}\label{PTakQ}
Let $K$ be a quadratic number fields with discriminant $d$, and set
$$ \fm = \begin{cases}  (d)      & \text{ if } d > 0, \\
                      (d) \infty & \text{ if } d < 0
         \end{cases} $$
Then 
$$ T_{K/\Q} \{\fm\} = \{(a): (\textstyle \frac{d}a) = +1 \}. $$
\end{prop}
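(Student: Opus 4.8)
The plan is to realize both sides as index-$2$ subgroups of $D_\fm$ and then show that one contains the other. Write $T := T_{K/\Q}\{\fm\}$, and let $\chi$ denote the Kronecker symbol $\art{d}{\cdot}$ extended multiplicatively; since $\Z$ is a PID, every element of $D_\fm$ is a principal ideal $(a)$, and the right-hand side is $R := \{(a) \in D_\fm : \chi(a) = +1\}$. For $d < 0$, where $\fm$ carries the infinite prime, I read $(a)$ through its positive generator, evaluating $\chi$ on the $a > 0$ with $(a) = a\Z$; this is forced by $\art{d}{-1} = -1$ and is exactly what the presence of $\infty$ in $\fm$ encodes. For $d > 0$ one has $\art{d}{-1} = +1$, so $\chi$ is already well defined on ideals. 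Once both groups are seen to have index $2$, it suffices to prove $T \subseteq R$.

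That $(D_\fm : T) = 2$ is immediate from the two basic inequalities: $K/\Q$ is cyclic of degree $2$, and $\fm$ is a multiple of the conductor of $K/\Q$, which for a quadratic field of discriminant $d$ is precisely $(d)$ when $d > 0$ and $(d)\infty$ when $d < 0$. Hence the First Inequality gives $(D_\fm : T) \le 2$ and the Second Inequality gives $(D_\fm : T) \ge 2$. On the other side, $\chi$ is a multiplicative map $D_\fm \to \{\pm 1\}$, surjective because $K \ne \Q$ forces some prime to be inert, and its kernel is $R$; thus $(D_\fm : R) = 2$ as well.

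It remains to check $T \subseteq R$, and I treat the two kinds of generators of $T$ separately. The norm part is handled by the decomposition law: a rational prime $p \nmid d$ with $\art{d}{p} = +1$ splits, so $(p) = N_{K/\Q}\fp \in T$ with $\chi(p) = +1$; a prime with $\art{d}{p} = -1$ is inert, so the smallest norm it contributes is $(p^2) = N_{K/\Q}(p\cO_K)$, and $\chi(p^2) = \art{d}{p}^2 = +1$; ramified primes divide $d$ and so lie outside $D_\fm$. Hence every $N_{K/\Q}\fA$ with $\fA$ coprime to $\fm$ lies in $R$. For the ray part I must show $P_\fm^{(1)} \subseteq R$, i.e. that $\chi(a) = +1$ whenever $a \equiv 1 \bmod \fm$ (with $a > 0$ when $d < 0$).

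This last point is the crux, and it is nothing other than the periodicity of $\art{d}{\cdot}$ modulo $|d|$ (together with its sign behaviour at $\infty$) --- the statement that $\chi$ factors through the ray class group $\Cl_\fm(\Q)$ --- which is precisely the quadratic reciprocity law in character form. I would invoke it as the classical input. Note that it cannot be circumvented by a softer argument: since $P_\fm^{(1)} \subseteq T$ holds by the very definition of the Takagi group, the desired identity $T = R$ already \emph{contains} this periodicity. Granting it, we have $T \subseteq R$, and the index computations above force $T = R$. The only remaining subtlety is the bookkeeping of the infinite prime for $d < 0$, which is what makes $\chi$ well defined on $D_\fm$ via positive generators and aligns $R$ with $P_\fm^{(1)}$.
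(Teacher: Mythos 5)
Your argument is correct and is in essence the paper's own: exhibit the containment $T_{K/\Q}\{\fm\} \subseteq H_\fm = \{(a): \art{d}{a} = +1\}$, note $(D_\fm : H_\fm) = 2$, and let the First Inequality squeeze the index of the Takagi group down to exactly $2$. Two remarks on where you diverge. First, your appeal to the Second Inequality is redundant, and slightly more expensive than what it replaces, since it requires knowing that $(d)$ resp.\ $(d)\infty$ is a multiple of the conductor of $K/\Q$: once $T \subseteq H_\fm$ is in hand, the lower bound $(D_\fm : T) \ge (D_\fm : H_\fm) = 2$ comes for free, which is exactly how the paper obtains it. Second --- and this is the genuinely valuable part of your write-up --- you isolate the step $P_\fm^{(1)} \subseteq H_\fm$, i.e.\ $\art{d}{a} = +1$ for $a \equiv 1 \bmod \fm$, as the crux. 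The paper's proof verifies only that norms of ideals land in $H_\fm$ and is silent on the ray part, yet without it the containment $T_{K/\Q}\{\fm\} \subseteq H_\fm$ is not established. Your observation that this step is precisely the periodicity of the Kronecker symbol modulo $\fm$ (Euler's form of the quadratic reciprocity law), and that it cannot be argued away --- since $P_\fm^{(1)} \subseteq T_{K/\Q}\{\fm\}$ holds by definition, the Proposition literally contains this periodicity --- is correct and worth stating explicitly; it can be supplied either as classical input or by embedding $K$ in $\Q(\zeta_{|d|})$ and using the cyclotomic decomposition law, but some such input is unavoidable. Your bookkeeping of the infinite place via positive generators when $d<0$ is also the right reading of the statement.
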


\begin{proof}
By definition we have $T_{K/\Q} \{\fm\} = \{N_{K/\Q} \fa\} \cdot P_\fm^{(1)}$,
where $\fa$ runs through the ideals coprime to $\fm$ in $K$. Since a
prime number $p \nmid d$ is the norm of an ideal from $K$ if and only 
if $(\frac dp) = +1$, the prime factors of $N\fa$ all satisfy
$(\frac dp) = +1$. This shows that
$$  T_{K/\Q} \{\fm\}  \subseteq H_\fm = 
           \{(a): (\textstyle \frac{d}a) = +1 \} \subseteq D_\fm. $$
Since $(D_\fm:H_\fm) = 2$ we find that $(D_\fm:T_{K/\Q}\{\fm\}) \ge 2$,
and the first inequality now implies $D_\fm  = T_{K/\Q}\{\fm\}$.
\end{proof}

For proving Prop. \ref{PTakQ} directly one would have to show that 
every positive integer $a$ coprime to $d$ and satisfying
$(\frac da) = +1$ can be written as a product $a = rs$, where
$r$ (if $d < 0$ then we demand $r > 0$) is a product of primes $p$ 
with $(\frac dp) = +1$, and $s \equiv 1 \bmod d$.

\subsection*{Artin's Reciprocity Law}

Now let $H$ be an ideal group defined mod $\fm$; by class field 
theory there is a class field $K/F$ with Galois group 
$\Gal(K/F) \simeq D_\fm/H$. The Artin reciprocity map gives an 
explicit and canonical isomorphism: for a prime ideal $\fp$ in 
$F$ unramified in $K$ let $(\frac{K/F}{\fp})$ denote the
Frobenius automorphism of a prime ideal $\fP$ in $K$ above $\fp$.

\begin{thm}[Artin's Reciprocity Law]
The symbol $\big(\frac{K/F}{\fa}\big)$ only depends on the ideal class
of $\fa$ in $D_\fm/I$, and the Artin map induces an exact sequence
$$\begin{CD} 1 @>>> H_m @>>>  D_m @>>> \Gal(K/F) @>>> 1. \end{CD} $$
In particular, there is an isomorphism
$$ D_\fm/H \lra \Gal(K/F) $$
between the generalized ideal class group $D_\fm/H$ and the
Galois group $\Gal(K/F)$ as abelian groups.
\end{thm}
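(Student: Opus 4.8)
The plan is to realize the asserted isomorphism as the map induced by $\fa \mapsto \art{K/F}{\fa}$ and to prove the theorem by splitting it into three claims: that $\fa \mapsto \art{K/F}{\fa}$ is a well-defined homomorphism $D_\fm \to \Gal(K/F)$, that its image is all of $\Gal(K/F)$, and that its kernel is exactly $H$. The first claim is immediate once one observes that for an abelian extension $K/F$ the Frobenius $\art{K/F}{\fp}$ does not depend on the choice of prime $\fP\mid\fp$, so the symbol extends multiplicatively to a homomorphism on $D_\fm$. Surjectivity is cheap as well: by the Chebotarev density theorem every element of $\Gal(K/F)$ occurs as $\art{K/F}{\fp}$ for infinitely many $\fp$, so the image is the whole group. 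The entire weight of the theorem therefore rests on the kernel, and more precisely on the containment $H\subseteq\ker$; the reverse containment is then free from an index count, since $(D_\fm:\ker)=|\mathrm{im}|=(K:F)=(D_\fm:H)$ shows at once that the well-definedness assertion and the exactness assertion are two faces of the same statement.

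To attack $H\subseteq\ker$ I would first reduce to $T_{K/F}\{\fm\}\subseteq\ker$: when $\fm$ is a multiple of the conductor the class field property identifies $H$ with the Takagi group $T_{K/F}\{\fm\}$, so it suffices to kill its two kinds of generators. Norms cause no trouble: if $\fP$ lies over $\fp$ with residue degree $f$, then $N_{K/F}\fP=\fp^{\,f}$ while $\art{K/F}{\fp}$ has order $f$, so $\art{K/F}{N_{K/F}\fA}=1$ for every $\fA$ coprime to $\fm$. What remains --- and this is the real content of Artin's law --- is to show $P_\fm^{(1)}\subseteq\ker$, i.e.\ that $\art{K/F}{(\alpha)}=1$ whenever $\alpha\equiv 1\bmod\fm$.

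For this last step I would follow Artin's own strategy rather than the cohomological route, since it stays within the ideal-theoretic language of this paper. Since an abelian extension is the compositum of the cyclic subextensions $K_\chi/F$ cut out by the characters $\chi$ of $\Gal(K/F)$, and the Artin symbol is compatible with restriction, it suffices to prove $\art{K_\chi/F}{(\alpha)}=1$ for each cyclic $K_\chi/F$; thus one reduces to $K/F$ cyclic. The cyclotomic case is then handled by hand: for $K=\Q(\zeta_m)$ one has $\art{K/\Q}{p}:\zeta_m\mapsto\zeta_m^{\,p}$, so $\art{K/\Q}{a}$ depends only on $a\bmod m$ and the reciprocity is visible directly. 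The crux is Artin's reduction of the general cyclic extension to the cyclotomic one: by adjoining a carefully chosen set of roots of unity one builds a diagram in which $K$ sits inside a field extracted from a cyclotomic tower, and in which the known cyclotomic reciprocity can be transported back to $K/F$ through the base change $F\to F(\zeta)$ and the compatibility of the symbol with the norm. Arranging the degrees and ramification of this auxiliary cyclotomic extension so that the diagram commutes and the comparison is clean is the hard part; everything else is bookkeeping, and once $P_\fm^{(1)}\subseteq\ker$ is established the index count of the first paragraph finishes the proof.
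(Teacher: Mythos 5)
The paper itself offers no proof of this theorem: it is quoted as the culminating statement of Takagi--Artin class field theory, and the surrounding text (and indeed the whole ``Harbingers'' series) treats it as a known result whose history is being discussed. So there is nothing in the paper to compare your argument against, and I can only judge the proposal on its own terms. Your overall architecture is the correct classical one: well-definedness, surjectivity and the identification of the kernel with $H$ are indeed equivalent up to the index count $(D_\fm:H)=(K:F)$ (which you are entitled to by Takagi's fundamental theorem), the norms $N_{K/F}\fP=\fp^f$ are killed because the Frobenius of $\fp$ has order $f$, and the whole weight does rest on $P_\fm^{(1)}\subseteq\ker$, attacked by reducing to cyclic subextensions and then crossing with cyclotomic fields.

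The genuine gap is that the one step carrying all the difficulty is precisely the step you do not perform. The reduction of a general cyclic extension $K/F$ to the cyclotomic case is not ``bookkeeping'': it requires, for each relevant prime, the construction of an auxiliary cyclotomic extension $F(\zeta_\ell)/F$ of degree divisible by suitable powers, disjoint from $K$ in a controlled way, together with the existence of auxiliary prime ideals whose Frobenius generates the right cyclic groups --- this is exactly the ``auxiliary primes'' problem that Part I of this series is devoted to, and it is where Artin himself was stuck for several years until he could adapt Chebotarev's crossing technique. Declaring it ``the hard part'' and moving on leaves the theorem unproved. Two smaller points: your cyclotomic base case is only verified for $\Q(\zeta_m)/\Q$, whereas the reduction requires the statement for $F(\zeta_m)/F$ over an arbitrary base (still easy, via $\art{F(\zeta_m)/F}{\fp}:\zeta_m\mapsto\zeta_m^{N\fp}$ and $N_{F/\Q}(\alpha)\equiv 1\bmod m$, but it must be said); and invoking Chebotarev for surjectivity is legitimate but heavier than necessary, since it already follows from the first inequality applied to the fixed field of the image.
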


We remark in passing that if $F/k$ is a normal extension, then
the Artin isomorphism is an isomorphism of $\Gal(k/F)$-modules.

\subsection*{Quadratic Reciprocity I}
Already the fact that the Artin map in quadratic number fields 
is constant on the cosets of $D_\fm/H$ implies the quadratic 
reciprocity law. In fact, we may identify the Artin symbol
$(\frac{K/\Q}{p})$ with the Kronecker symbol $(\frac dp)$, 
and the claim that this only depends on the class generated
by $(p)$ in $D_\fm/T_{K/\Q}\{(d)\infty\}$ (compare Prop. \ref{PTakQ})
means that $(\frac dp) = (\frac dq)$ for all primes 
$p \equiv q \bmod d\infty$. This is Euler's formulation of the
quadratic reciprocity law, which in turn is easily seen to be
equivalent to Legendre's version (see \cite{LBT}).

\subsection*{Quadratic Reciprocity II}
Let $p$ be a positive prime and put $p^* = (-1)^{(p-1)/2} p$; then
$p^* \equiv 1 \bmod 4$.
The quadratic number field $K = \Q(\sqrt{p^*}\,)$ is a class field 
modulo $p\infty$ of $\Q$; in fact its associated ideal group is the 
group $I^2P_{p\infty}$, where $I$ is the group of all fractional 
ideals in $\Q$ coprime to $p$, and 
$P_{p\infty} = \{(a) \in I: a \equiv 1 \bmod p\infty\}$.

Now observe that an odd prime $q \ne p$ splits 
\begin{itemize}
\item in the Kummer extension
      $K/\Q$ if and only if $(p^*/q) = +1$;
\item in the class field $K/\Q$ if and only if
      $q \equiv a^2 \bmod p\infty$, that is,
      if and only if $q > 0$ and $(q/p) = +1$.
\end{itemize}
Comparing these decomposition laws implies the quadratic reciprocity 
law $(p^*/q) = (q/p)$ for positive primes $p, q$. Observe that the
Legendre symbol $(q/p)$ giving the decomposition in the class field
can be identified with the Artin symbol for the quadratic extension,
which in turn is defined as the Frobenius automorphism of $K/\Q$.

\section{The Transfer Map}

In this section we will show that the transfer map, which was
first defined by Schur, shows up naturally in class field theory.
In the next section we will show that Gauss's Lemma in the theory
of quadratic residues is an example of a transfer, and give a
class field theoretical interpretation of its content. As an 
application, we will prove the quadratic reciprocity law using
the transfer in the incarnation of Gauss's Lemma.

Consider the following situation: let $F$ be a number field,
$L/F$ an abelian extension, and $K/F$ some subextension. Let 
$G = \Gal(L/F)$ and set $U = \Gal(L/K)$. Then $L$ is a class 
field of $K$ with respect to some ideal group $T_{L/K}$, and 
this ideal group is contained in $T_{L/F}$, which describes 
$L$ as a class field over $K$. Ideals in the principal class 
in $F$ are also contained in the principal class in $K$.

It is therefore a natural question to ask how the Artin symbols
$\art{L/F}{\fa}$ and $\art{L/K}{\fa}$ are related\footnote{Observe
that the ideal $\fa$ in $\art{L/F}{\fa}$ should more exactly be written
as $\fa \cO_F$. In particular, one has to recall that in the symbol
$\art{L/F}{\fp}$, the prime ideal $\fp$ in $K$ need not be prime anymore 
in $F$}. We know that the Artin map induces isomorphisms
$$ \Big(\frac{L/F}{\cdot}\Big): \Cl(F) \lra G = \Gal(L/F)
   \quad \text{and} \quad 
   \Big(\frac{L/K}{\cdot}\Big): \Cl(K) \lra U = \Gal(L/K). $$   

\begin{figure}[h!]
\begin{diagram}
        &              & L       & &             & & & & 1 \\ 
        & \ruLine      & \uLine  & &             & & & \ruLine & \uLine \\
  F'    &              & K       & &
                   \rTo^{\Gal(L/\ \cdot\ )} & & H & & U  \\
\uLine  &  \ruLine     &         & &             & & \uLine & \ruLine  &  \\ 
  F     &              &         & &             & & G  &  &        
\end{diagram}
\caption{}\label{Fig1}
\end{figure}

It follows from the above that the map sending 
$\art{L/F}{\fa} \too \art{L/K}{\fa}$ induces a group homomorphism
$V_{G \to U}: G \lra  U$. The homomorphism $V_{G \to U}$ is called the
transfer  (Verlagerung), and can be computed explicitly as follows:
let $G = \bigcup r_i U$ be a decomposition of $G$ into disjoint
cosets $r_j U$. For $g \in G$ write
$ gr_i = r_j u_j$ for $u_j \in U_j $, and set 
$$ V_{G \to U} (g) = \prod_j u_j \cdot U', $$
where $U'$ is the commutator subgroup of $U$.

\subsection*{Transfer in Abelian Groups}

Although the transfer map is usually connected with nonabelian 
groups since the transfer $G/G' \lra G'/G''$, where 
$G = Gal(K^\infty/K)$ and $K^\infty$ is the Hilbert class field of $K$, 
is used for studying the capitulation of ideal classes from $K$ in 
subfields of its Hilbert class field $K^1$, the transfer can also be 
used for abelian groups:

\begin{lem}\label{GTc}
Let $U$ be a subgroup of an abelian group $G$, and assume that 
$G/U$ is a cyclic group of order $f$. Then $V_{G \to U}(x) = x^f$
for all $x \in G$.
\end{lem}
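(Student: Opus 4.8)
The plan is to exploit the fact that since $G$ is abelian its commutator subgroup $U'$ is trivial, so the transfer takes values in $U$ itself and the defining formula collapses to $V_{G \to U}(x) = \prod_j u_j$ with no quotient to keep track of. The remaining work is to choose a convenient transversal of $U$ in $G$ and to carry out the product explicitly.

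First I would pick an element $\sigma \in G$ whose image $\sigma U$ generates the cyclic group $G/U$ of order $f$, and take as coset representatives $r_i = \sigma^i$ for $i = 0, 1, \ldots, f-1$. Given $x \in G$, its image in $G/U$ is $\sigma^k U$ for some integer $k$. Then $x r_i = x \sigma^i$ lies in the coset $\sigma^{i+k} U$, so writing $x r_i = r_{\pi(i)} u_i$ the induced permutation $\pi$ of $\{0, \ldots, f-1\}$ is simply $\pi(i) \equiv i + k \pmod f$, and the local factor is $u_i = r_{\pi(i)}^{-1} x r_i = \sigma^{-\pi(i)} x \sigma^i$, which indeed lies in $U$ because $x r_i U = r_{\pi(i)} U$.

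Next I would multiply these factors together, using commutativity to collect the copies of $x$ and of $\sigma$ separately:
$$ V_{G \to U}(x) = \prod_{i=0}^{f-1} u_i = x^f \cdot \sigma^{\,\sum_i i - \sum_i \pi(i)}. $$
Since $\pi$ is a permutation of $\{0, \ldots, f-1\}$, the two sums $\sum_i i$ and $\sum_i \pi(i)$ coincide, the exponent of $\sigma$ vanishes, and we are left with $V_{G \to U}(x) = x^f$, as claimed.

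The step that needs the most care is the bookkeeping: one must verify that the reduction $\pi(i) \equiv i + k \bmod f$ is exactly compensated by the choice of reduced representatives, so that the potential wraparound (note $\sigma^f \in U$ need not be trivial) is absorbed and the spurious powers of $\sigma$ cancel. One should also record that the value of the transfer is independent of the chosen transversal, which legitimizes the convenient choice $r_i = \sigma^i$; alternatively, the same result follows by decomposing $G/U$ into orbits under multiplication by $x$, where the abelian hypothesis again removes the conjugations and each of the $f/\operatorname{ord}(xU)$ orbits contributes $x^{\operatorname{ord}(xU)}$, for a total of $x^f$.
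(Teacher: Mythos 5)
Your proof is correct and follows essentially the same route as the paper: both use the transversal $1, \sigma, \ldots, \sigma^{f-1}$ coming from a generator of $G/U$ and compute the local factors explicitly. The only cosmetic difference is that you treat a general $x$ in one direct computation, with the permutation-sum argument $\sum_i i = \sum_i \pi(i)$ killing the spurious powers of $\sigma$, whereas the paper first writes $x = \sigma^j u$ and evaluates $V(\sigma)$ and $V(u)$ separately using multiplicativity of the transfer.
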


\begin{proof}
Let $zU$ be a generator of $G/U$, and write 
$G = U \cup zU \cup \ldots \cup z^{f-1}U$. Given any
$x \in G$, we can write $x = z^ju$ for some $u \in U$ and find
$V(x) = V(z^ju) = V(z)^j V(u)$; thus we only need to compute
$V(u)$ and $V(z)$. 
\begin{enumerate}
\item Since $u \cdot x^j = x^j h_j(u)$ for $h_j(u) = u$,
      we find $V(u) = \prod h_j(u) = u^f$. 
\item Here $z \cdot z^j = z^{j+1}$, hence $h_j(z) = 1$ for 
      $1 \le j \le f-2$ and $h_{f-1}(z) = z^f \in U$. Thus $V(z) = z^f$.
\end{enumerate}
Now $V(x) = V(z^ju) = z^{jf}u^f = x^f$ as claimed.
\end{proof}

If $U$ is a subgroup of Klein's four group $G$ with order $2$, then 
$V_{G \to U}$ is the trivial map. There are also examples where
$V$ is surjective:

\begin{cor}\label{Csur}
If $U$ is a subgroup of a finite cyclic group $G$, then the transfer
$V_{G \to U}$ is surjective.
\end{cor}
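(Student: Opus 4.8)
The plan is to reduce everything to Lemma~\ref{GTc}. Since $G$ is cyclic, every quotient of $G$ is cyclic; in particular $G/U$ is cyclic, of order $f = (G:U)$. Thus the hypotheses of Lemma~\ref{GTc} are satisfied, and we obtain $V_{G \to U}(x) = x^f$ for all $x \in G$. The corollary therefore amounts to determining the image of the $f$-th power map on $G$.

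To this end I would fix a generator $g$ of $G$ and set $n = |G|$. The image of $V_{G \to U}$ is then
$$ \{x^f : x \in G\} = \langle g^f \rangle. $$
Because $f = (G:U)$ divides $n$, the element $g^f$ has order $n/f = |U|$, so $\langle g^f \rangle$ is a subgroup of $G$ of order $|U|$.

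Finally I would invoke the standard fact that a finite cyclic group possesses exactly one subgroup of each order dividing its own. Since both $U$ and $\langle g^f \rangle$ have order $|U|$, they must coincide, and hence the image of $V_{G \to U}$ is all of $U$, which is the asserted surjectivity.

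There is no serious obstacle here; the only point requiring attention is that one must argue the image is \emph{not} a proper subgroup of $U$. By construction the transfer already lands in $U$, so the genuine content is the reverse inclusion, and this is precisely what the uniqueness of subgroups in a cyclic group supplies: it forces $\langle g^f \rangle$ to equal $U$ rather than some smaller subgroup.
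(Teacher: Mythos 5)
Your argument is correct and is essentially identical to the paper's: both apply Lemma~\ref{GTc} to see that $V(g)=g^{(G:U)}$ for a generator $g$, note that this element generates a subgroup of order $\#U$, and conclude by the uniqueness of subgroups of a given order in a finite cyclic group. You merely spell out the order computation and the two inclusions a bit more explicitly.
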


\begin{proof}
Let $G = \la g \ra$; then $V(g) = g^{(G:U)}$ generates a subgroup
of order $\# U$, and since $U$ is the unique subgroup of $G$ with 
this order, the claim follows.
\end{proof}

Now assume that $L/F$ is a cyclic extension, and let $K/F$ be the 
subextension fixed by some subgroup $U$ of $G = \Gal(L/F)$ (see 
Fig.~\ref{Fig1}). The transfer map $V_{G \to U}$ is surjective by 
Cor.~\ref{Csur}, and its kernel $H$ fixes a subextension $F'/F$. 
Observe that $\Gal(F'/F) \simeq G/H \simeq U \simeq \Gal(L/K)$. 
Thus the transfer map gives us an isomorphism between these 
two Galois groups:

\begin{prop}\label{Pker}
Let $L/F$ be a cyclic extension as above, and let $\fp$ be a 
prime ideal in $F$ unramified in $L$. Then $V_{G \to U}$ induces
an isomorphism 
$$ G/H \simeq \Gal(F'/F) \lra \Gal(L/K) \simeq U. $$ 
In particular, the order of $\sigma_\fp \cdot H$ is equal to the
order of $V(\sigma_\fp)$; this implies in particular that 
$\fp \in \Spl(F'/F)$ if and only if $\sigma_\fp \in \ker V$. 
\end{prop}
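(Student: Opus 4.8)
The plan is to assemble the isomorphism from two standard facts and then read off both consequences. First I would record that $V = V_{G \to U}$ is surjective by Cor.~\ref{Csur}, so that its kernel $H$ satisfies $G/H \cong U$ through the homomorphism $\bar V$ that $V$ induces on the quotient. Since $F'$ is by definition the fixed field of $H$, the Galois correspondence gives $H = \Gal(L/F')$ and hence $\Gal(F'/F) \cong G/H$; likewise $U = \Gal(L/K)$ by the definition of $K$. Composing these identifications turns $\bar V$ into the asserted isomorphism $\Gal(F'/F) \lra \Gal(L/K)$, under which the coset $\sigma_\fp H$ is carried to $V(\sigma_\fp)$, where $\sigma_\fp = \art{L/F}{\fp}$ is the Frobenius of $\fp$ in $G$ (well defined as an element since $L/F$ is abelian).

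The order statement is then immediate. An isomorphism preserves the order of each element, so the order of $\sigma_\fp H$ in $G/H \cong \Gal(F'/F)$ equals the order of its image $V(\sigma_\fp)$ in $U \cong \Gal(L/K)$; no calculation is needed here beyond the bookkeeping of the previous paragraph.

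For the splitting criterion I would invoke the compatibility of the Artin symbol with restriction: the Frobenius of $\fp$ in the subextension $F'/F$ is exactly the image of $\sigma_\fp$ under the projection $G \twoheadrightarrow G/H \cong \Gal(F'/F)$, that is, the coset $\sigma_\fp H$. Because $\fp$ is unramified in $L$ it is a fortiori unramified in $F'$, so $\fp$ splits completely in $F'/F$ precisely when this Frobenius is trivial, i.e. when $\sigma_\fp H = H$, equivalently $\sigma_\fp \in H = \ker V$. This gives $\fp \in \Spl(F'/F)$ if and only if $\sigma_\fp \in \ker V$. The hard part is really only this last functoriality — the assertion that restricting $\art{L/F}{\fp}$ to $F'$ yields the Frobenius governing the decomposition of $\fp$ there, so that the transfer-theoretic condition $\sigma_\fp \in \ker V$ translates into a genuine splitting condition. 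Everything else reduces to the first isomorphism theorem, the Galois correspondence, and the standard fact that an unramified prime in an abelian extension splits completely exactly when its Frobenius equals $1$.
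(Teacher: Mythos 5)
Your argument is correct and follows essentially the same route as the paper, which leaves the proof implicit in the paragraph preceding the proposition: surjectivity of $V_{G\to U}$ from Cor.~\ref{Csur}, the identification $\Gal(F'/F)\simeq G/H\simeq U\simeq\Gal(L/K)$ via the Galois correspondence, and the standard compatibility of the Frobenius with restriction to the subextension $F'$. Your write-up merely makes explicit the bookkeeping (first isomorphism theorem, order preservation, and the splitting criterion for unramified primes) that the paper takes for granted.
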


\section{Gauss's Lemma and The Quadratic Reciprocity Law}

Gauss gave eight proofs of the quadratic reciprocity law: his first
proof was by induction and used the existence of auxiliary primes
discussed in \cite{LHa1}. Gauss's second 
proof was based on the genus theory of binary quadratic forms and
became the role model for Kummer's proof of the $p$-th power
reciprocity law in $\Q(\zeta_p)$ for regular primes; the
inequality between the number of genera and the number of ambiguous
ideal classes, which was the central point in Gauss's second proof,
became an important tool in Takagi's version of class field theory.

Gauss's third proof used what became known as Gauss's Lemma in the theory
of quadratic residues. In this section, we will show that it has a 
natural class field theoretic interpretation.

Let $p = 2m+1$ be an odd prime and $a$ an integer coprime to $p$.
A half system modulo $p$ is a set $A = \{a_1, \ldots, a_m\}$ of
integers $a_j$ with the property that every coprime residue class
modulo $p$ has a unique representative in $A$ or in $-A$. Thus
for every $1 \le j \le m$ we can write 
$a \cdot a_j \equiv s_j a_{\pi(j)} \bmod p$, where $s_j \in \{\pm 1\}$
and where $\pi$ is a permutation of $A$. Gauss's Lemma says that
the Legendre symbol $(\frac ap)$ is just the product $\prod s_j$ of
signs in these $m$ congruences.

Now consider the ideal group $H$ consisting of all ideals $(a)$ with 
$a > 0$ and $a \equiv 1 \bmod p$. The corresponding class field
$\Q\{\fm\}$ with $\fm = p \infty$ is $K = \Q(\zeta)$, the field of 
$p$-th roots of unity, and the Artin map sends a residue class 
$a \bmod p\infty$ to the automorphism $\sigma_a: \zeta \too \zeta^a$. 
We can identify the ideal class group $G = D_\fm/H$ with the group 
$(\Z/p\Z)^\times$ by choosing positive generators of ideals.

\begin{lem}\label{LGL}
The subgroup $U$ of $G = (\Z/p\Z)^\times$ generated by the residue 
class $-1 \bmod p$ fixes the maximal real subfield 
$K^+ = \Q(\zeta_p + \zeta_p^{-1})$ of $K = \Q(\zeta_p)$, and the
transfer $V_{G \to U}$ satisfies $V(a) \equiv (\frac ap) \bmod p$.
\end{lem}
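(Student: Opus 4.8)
The plan is to treat the two assertions separately: the fixed-field statement is immediate from the Galois correspondence, while the transfer identity is where the content lies.

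For the fixed field, I would use the Artin identification $a \bmod p \leftrightarrow \sigma_a$ recorded just above the lemma. The generator $-1$ of $U$ corresponds to $\sigma_{-1}: \zeta_p \mapsto \zeta_p^{-1}$, i.e.\ to complex conjugation, which fixes $\zeta_p + \zeta_p^{-1}$ and hence all of $K^+ = \Q(\zeta_p + \zeta_p^{-1})$. Since $[K:K^+] = 2 = \#U$, the Galois correspondence forces $K^+$ to be exactly the fixed field of $U$.

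For the transfer, the key observation---and really the whole point of the section---is that a half system $A = \{a_1, \ldots, a_m\}$ modulo $p$ is precisely a transversal for $U = \{\pm 1\}$ in $G = (\Z/p\Z)^\times$. Indeed, the defining property that every coprime class has a unique representative in $A$ or in $-A$ says exactly that the $m$ cosets $a_j U = \{a_j, -a_j\}$ are distinct and exhaust $G$. Taking these $a_j$ as the representatives $r_j$ in the transfer formula, the congruence $a \cdot a_j \equiv s_j a_{\pi(j)} \bmod p$ is literally the relation $a\,r_j = r_{\pi(j)} u_j$ with $u_j = s_j \in U$, the permutation $\pi$ of Gauss's Lemma being the permutation of cosets induced by multiplication by $a$. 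Because $U$ has order $2$ it is abelian, so $U' = 1$ and the transfer formula collapses to $V_{G \to U}(a) = \prod_j u_j = \prod_j s_j$. Gauss's Lemma now identifies this product of signs with $\art{a}{p}$; reading the element $-1 \in U$ as the residue class $-1 \bmod p$ then gives $V(a) \equiv \art{a}{p} \bmod p$.

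The main obstacle is not any calculation but the conceptual bookkeeping of this middle step: one must verify that the sign $s_j$ and the permutation $\pi$ delivered by Gauss's Lemma are genuinely the data $u_j$ and the coset permutation entering the transfer, and that this matching is independent of the chosen half system (equivalently, that well-definedness of the transfer and representative-independence of $\prod s_j$ in Gauss's Lemma are the same fact). As a cross-check I would note that Lemma~\ref{GTc} applies directly, since $G/U$ is cyclic of order $m = (p-1)/2$; it yields $V(a) = a^{(p-1)/2}$, and Euler's criterion recovers $a^{(p-1)/2} \equiv \art{a}{p} \bmod p$. Comparing the two evaluations of $V$ in fact amounts to a proof of Gauss's Lemma via the transfer.
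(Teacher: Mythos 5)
Your proposal is correct, but your primary route differs from the paper's. The paper proves the transfer identity abstractly: since $G/U$ is cyclic of order $f=(p-1)/2$, Lemma~\ref{GTc} gives $V(a)=a^{(p-1)/2}$, and Euler's criterion converts this to $\art{a}{p} \bmod p$ --- that is exactly the two-line argument you relegate to your ``cross-check.'' Your main argument instead computes $V(a)$ concretely by taking a half system as the transversal for $U=\{\pm 1\}$, obtaining $V(a)=\prod_j s_j$, and then invokes Gauss's Lemma to identify this product with $\art{a}{p}$. That is logically valid (Gauss's Lemma is a true, independently provable statement, and your matching of the data $s_j$, $\pi$ with the transfer data $u_j$ and the coset permutation is exactly right), but it inverts the intended logical direction: the paper proves $V(a)\equiv\art{a}{p}$ \emph{without} Gauss's Lemma precisely so that the half-system computation of the transfer then \emph{yields} Gauss's Lemma as a corollary (``The formula for $V(a)$ in Lemma~\ref{LGL} is Gauss's Lemma''). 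Taking Gauss's Lemma as an input would make that subsequent remark circular. You clearly see this, since your closing sentence notes that comparing the two evaluations of $V$ amounts to a proof of Gauss's Lemma via the transfer --- so the cleanest fix is simply to promote your cross-check to the proof and demote the half-system computation to the corollary it is meant to be. Your treatment of the fixed-field assertion (identifying $\sigma_{-1}$ with complex conjugation and counting degrees) is correct and in fact more explicit than the paper, which leaves that part unproved.
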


\begin{proof}
Since $G$ is abelian, the transfer map $V_{G \to U}$ is a homomorphism 
$V: G \to U$. By Lemma \ref{GTc} we have $V(a) \equiv a^{(p-1)/2} \bmod p$.
The claim now follows from Euler's criterion. 
\end{proof}

The formula for $V(a)$ in Lemma \ref{LGL} is Gauss's Lemma. 
It was observed by various mathematicians that Gauss's Lemma 
is an example of the group theoreticl transfer map; see e.g. 
Cartier \cite{Cart}, Delsarte \cite{Dels}, Leutbecher \cite{Leut} 
and Waterhouse \cite{Wat}. The full class field theoretic 
interpretation given above seems to be new.

Let us now exploit the actual content of the transfer map, namely
the connection with the decomposition of prime ideals given in 
Prop. \ref{Pker}: since $G$ is cyclic, the map $V: G \lra U$ is onto 
by Cor~\ref{Csur}, and $H = \ker V$ is an ideal group whose class 
field is the quadratic subfield $k = \Q(\sqrt{p^*}\,)$ of $K$. By 
Prop. \ref{Pker}, we have $q \in \Spl(k/\Q)$ if and only if 
$V(\sigma_q) = (\frac qp) = 1$; since $q$ splits in $k/\Q$ if and 
only if $(\frac{p^*}q) = 1$, we deduce 

\begin{thm} The Quadratic Reciprocity Law: for two distinct
odd primes $p$ and $q$, we have 
$$  \bigg(\frac{p^*}q\bigg) = \bigg(\frac qp\bigg). $$
\end{thm}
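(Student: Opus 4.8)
The plan is to combine two characterizations of when an odd prime $q \ne p$ splits in the quadratic field $k = \Q(\sqrt{p^*}\,)$: one coming from the Kummer description of $k$ via the radical $\sqrt{p^*}$, and one coming from the class field description of $k$ as the fixed field of $\ker V$ inside the cyclotomic field. Setting these two criteria equal yields the reciprocity law directly.

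First I would recall that $k = \Q(\sqrt{p^*}\,)$ is a quadratic extension of $\Q$, so a prime $q$ unramified in $k$ splits if and only if the defining polynomial $x^2 - p^*$ factors modulo $q$, i.e. if and only if $p^*$ is a quadratic residue mod $q$; by definition of the Legendre symbol this says $q \in \Spl(k/\Q) \iff (\frac{p^*}q) = +1$. Since $p^* \equiv 1 \bmod 4$ and $q$ is odd and distinct from $p$, the prime $q$ is indeed unramified, so this equivalence is unconditional.

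Next I would use the class field side. With $G = (\Z/p\Z)^\times$, $U = \la -1 \ra$, and $K = \Q(\zeta_p)$ as in Lemma~\ref{LGL}, the field $k$ is the quadratic subfield of $K$ fixed by $\ker V$, where $V = V_{G \to U}$ is the transfer. By Prop.~\ref{Pker} applied to the cyclic extension $K/\Q$, a prime $q$ lies in $\Spl(k/\Q)$ precisely when its Frobenius $\sigma_q$ lies in $\ker V$. By Lemma~\ref{LGL} we have $V(\sigma_q) \equiv (\frac qp) \bmod p$, and since $V$ takes values in $U = \{\pm 1\}$, this congruence pins down $V(\sigma_q)$ exactly: $V(\sigma_q) = +1 \iff (\frac qp) = +1$. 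Hence $q \in \Spl(k/\Q) \iff (\frac qp) = +1$.

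Equating the two splitting criteria gives $(\frac{p^*}q) = +1 \iff (\frac qp) = +1$, and since both symbols take values in $\{\pm 1\}$ this forces the equality $(\frac{p^*}q) = (\frac qp)$, which is the assertion. The main subtlety to be careful about is the last step in the class field argument: Lemma~\ref{LGL} only gives a \emph{congruence} mod $p$ for $V(\sigma_q)$, so I must invoke that $V$ lands in $U = \{\pm 1\}$ to upgrade this to genuine equality before I can read off the splitting behavior. Everything else is a matter of correctly matching the Kummer and class-field descriptions of the single quadratic field $k$, which is exactly what the preceding propositions were arranged to supply.
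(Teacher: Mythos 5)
Your proof is correct and follows essentially the same route as the paper: identifying $k = \Q(\sqrt{p^*}\,)$ as the class field of $\ker V$ inside $\Q(\zeta_p)$, using Prop.~\ref{Pker} and Lemma~\ref{LGL} to get $q \in \Spl(k/\Q) \iff (\frac qp) = 1$, and comparing with the Kummer criterion $(\frac{p^*}q) = 1$. Your extra remark about upgrading the congruence mod $p$ to an equality in $\{\pm 1\}$ is a worthwhile point of care that the paper leaves implicit, but it does not change the argument.
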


Observe that this proof does not require the full power of class
field theory; it only uses Artin's reciprocity law for cyclotomic
extensions of $\Q$ (which, as we have seen, was proved by Dedekind), 
standard properties of the Frobenius automorphism, and a calculation 
of the transfer map (Gauss's Lemma). Actually the transfer map can 
be eliminated from our arguments, giving the well known proof of the 
reciprocity law by comparing the splitting of primes in quadratic and 
cyclotomic extensions. The proof above showing that Gauss's lemma 
comes up more or less naturally in the class field theoretical context 
has a certain charm, however, and perhaps helps to explain {\em why}
Gauss's Lemma can be used for proving the quadratic reciprocity law.

In fact, most of the elementary proofs of the quadratic reciprocity
law used some form of Gauss's Lemma (see \cite{LBT}), and backed with 
a lot of slicker proofs (see e.g. \cite{LRL}) it is quite easy to look 
down upon them; maybe the exposition above can help to rehabilitate
Gauss's Lemma somewhat.

\end{document}